\documentclass[12pt]{amsart}

\setlength{\textwidth}{450pt}
\setlength{\oddsidemargin}{0pt}
\setlength{\evensidemargin}{0pt}

\usepackage{verbatim, amssymb, enumitem}
\usepackage{pgf,tikz,ifthen}
\usetikzlibrary{arrows}
\usetikzlibrary{calc}

\usepackage[breaklinks=true,colorlinks=true,linkcolor=green,citecolor=red,urlcolor=blue]{hyperref}

%

\renewcommand \a{\alpha}

\newcommand \id{\mathrm{id}}
\newcommand \br{\mathbb{R}}

\newcommand \Rn{\mathbb R^n}

\newcommand \rk{\operatorname{rk}}
\newcommand \Ker{\operatorname{Ker}}
\newcommand \Der{\operatorname{Der}}
\newcommand \End{\operatorname{End}}

\renewcommand \Im{\operatorname{Im}}

\newcommand \Span{\operatorname{Span}}
\newcommand \Tr{\operatorname{Tr}}

\newcommand \cJ{\mathcal{J}}

\newcommand \cC{\mathcal{C}}
\newcommand \cE{\mathcal{E}}

\newcommand \mU{\mathcal{U}}
\newcommand \cV{\mathcal{V}}
\newcommand \cG{\mathcal{G}}

\newcommand \ig{\mathfrak{i}}
\newcommand\ag{\mathfrak a}
\newcommand\kg{\mathfrak k}
\newcommand\g{\mathfrak g}

\newcommand\h{\mathfrak h}
\newcommand\z{\mathfrak z}

\newcommand \so{\mathfrak{so}}

\newcommand \n{\mathfrak{n}}

\newcommand \p{\mathfrak{p}}

\newcommand \ad{\operatorname{ad}}
\newcommand \Ad{\operatorname{Ad}}

\DeclareMathOperator{\Lie}{Lie}

\newcommand \<{\langle}
\renewcommand \>{\rangle}
\newcommand \ip{\<\cdot,\cdot\>}
\newcommand \ipr{(\cdot,\cdot)}

\newtheorem*{theorem*}{Theorem}

\newtheorem*{corollary*}{Corollary}
\newtheorem*{conj*}{Conjecture}
\newtheorem{lemma}{Lemma}
\newtheorem{proposition}{Proposition}
\newtheorem*{prop*}{Proposition}

\theoremstyle{definition}

\newtheorem*{definition*}{Definition}

\theoremstyle{remark}

\newtheorem*{notation*}{Notation}
\newtheorem*{algorithm*}{Algorithm}
\newtheorem*{example*}{Example}

\begin{document}

\title[GO-nilmanifolds associated with graphs]{Geodesic orbit and naturally reductive nilmanifolds associated with graphs}

\author{Y.~Nikolayevsky}
\address{Department of Mathematics and Statistics, La Trobe University, Melbourne, Australia 3086}
\email{Y.Nikolayevsky@latrobe.edu.au}
\thanks{The author was partially supported by ARC Discovery Grant DP130103485. }

\subjclass[2010]{53C30, 17B30, 05C25}

\keywords{two-step nilmanifold associated with a graph, geodesic orbit manifold} 

\begin{abstract}
We study Riemannian nilmanifolds associated with graphs. We prove that such a nilmanifold is geodesic orbit if and only if it is naturally reductive if and only if its defining graph is the disjoint union of complete graphs and the left-invariant metric is generated by a certain naturally defined inner product.
\end{abstract}

\maketitle

\section{Introduction}
\label{s:intro}

A Riemannian manifold $(M,g)$ is called a \emph{geodesic orbit manifold} (or a manifold with homogeneous geodesics, or a GO-manifold) if any geodesic of $M$ is an orbit of a one-parameter subgroup of the full isometry group of $(M,g)$ \cite{KV} (one loses no generality by replacing the full isometry group by its connected identity component $I_0(M)$). Any connected geodesic orbit manifold is homogeneous. Examples of geodesic orbit manifolds include symmetric spaces, weekly symmetric spaces, normal and generalised normal homogeneous spaces and naturally reductive spaces. For an up-to-date account of the state of knowledge on geodesic orbit manifolds we refer the reader to \cite{Arv,Nik2017} and the bibliographies therein.

Let $(M=G/H, g)$, where $G=I_0(M)$ and $H$ is a compact subgroup of $G$, be a homogeneous Riemannian manifold and let $\g=\h\oplus \p$ be an $\Ad(H)$-invariant decomposition, where $\g=\Lie(G), \;\h=\Lie(H)$ and $\p$ is identified with the tangent space to $M$ at $eH$. The Riemannian metric $g$ is $G$-invariant and is determined by an $\Ad(H)$-invariant inner product $\ipr$ on $\p$. The manifold $(M,g)$ is called \emph{naturally reductive manifold} if an $\Ad(H)$-invariant complement $\p$ can be chosen in such a way that $([X,Y]_{\p},X) = 0$ for all $X,Y \in \p$, where the subscript $\p$ denotes the $\p$-component. For comparison, on the Lie algebra level, $g$ is geodesic orbit if and only if for any $X \in \p$ (with any choice of $\p$), there exists $Z \in \h$ such that $([X+Z,Y]_{\p},X) =0$ for all $Y \in \p$ \cite[Proposition~2.1]{KV}. It follows that any naturally reductive manifold is a geodesic orbit manifold; the converse is false when $\dim M \ge 5$.

Note that by replacing in the above definitions the group $I_0(M)$ by its subgroup $G$ which acts transitively on $(M,g)$ one gets \emph{geodesic orbit spaces} and \emph{naturally reductive spaces}. The property of $(M,g)$ to be one of these depends on the choice of $G$.

By the structural theory developed in \cite[Theorem~1.14]{G}, \cite[Section~3]{GN}, the study of general geodesic orbit manifolds can be reduced to the study of such in the following three cases: $M$ is a nilmanifold, $M$ is compact, or $M$ admits a transitive group of isometries which is semisimple of noncompact type. In particular, any geodesic orbit nilmanifold is necessarily two-step (or abelian) \cite[Theorem~2.2]{G}. Motivated by this fact we study in this paper an important class of two-step nilpotent Lie groups, the nilpotent Lie groups associated with graphs.

Let $\cG=(\cV,\cE)$ be a finite, simple, undirected graph, with the set of vertices $\cV=\{V_1, \dots, V_n\}, \; n \ge 1$, and the set of edges $\cE=\{E_1, \dots, E_m\}, \; m \ge 0$.

We write $E_\a=V_iV_j$ if the edge $E_\a$ joins the vertices $V_i$ and $V_j$, where $\a=1, \dots, m,\; 1 \le i \ne j \le n$. A Lie algebra $\n$ is said to be \emph{associated with the graph $\cG$}, if $\n$ has a basis $\{e_1, \dots, e_n, z_1, \dots, z_m\}$ such that all the vectors $z_\a$ are in the centre of $\n$, and for $1 \le i < j \le n$, we have
\begin{equation*}
[e_i,e_j] = \left\{
              \begin{array}{ll}
                z_\a, & \hbox{if $E_\a=V_iV_j$;} \\
                0, & \hbox{otherwise.}
              \end{array}
            \right.
\end{equation*}
Any such basis for $\n$ is called \emph{a standard basis}. There are many standard bases -- any one of them can be obtained from a particular one by an automorphism of $\n$ (for the description of the derivation algebra see Section~\ref{s:pre}). However, by the result of \cite{M2014} the algebra $\n$ determines the graph $\cG$ uniquely, up to isomorphism. A Lie algebra associated with a graph is two-step nilpotent (and is abelian if and only if $\cE=\varnothing$). We say that a simply connected Lie group $G$ is \emph{associated with a graph $\cG$}, if its Lie algebra is.

An inner product on the Lie algebra $\n$ associated with a graph $\cG$ is called \emph{standard} if $\n$ admits an orthonormal standard basis; we also call \emph{standard} the corresponding left-invariant metric on the Lie group $G$ associated with $\cG$.

We introduce a more general class of inner products which are obtained from a standard inner product by taking a certain orthogonal splitting of the derived algebra and rescaling the standard inner product on every subspace of that splitting. Let $(\n, \ipr)$ be a metric two-step nilpotent Lie algebra. Denote $\z = [\n, \n]$ and $\ag = \z^\perp$. Note that $\z$ lies in the centre of $\n$. We define an (injective) linear map $J:\z \to \so(\ag), \; Z \mapsto J_Z$, by
\begin{equation}\label{eq:defJZ}
(J_Z X, Y) = (Z, [X, Y]),
\end{equation}
for $X, Y \in \ag, \; Z \in \z$. Denote $\cJ = \Span(J_Z \, | \, Z \in \z)$.

Now let a graph $\cG$ be a disjoint union of complete graphs and let $\ip$ be the standard inner product defined by a standard basis $\{e_1, \dots, e_n, z_1, \dots, z_m\}$ for the Lie algebra $\n$ associated to $\cG$. Then $\z=\Span(z_1, \dots, z_m)$ and $\ag=\Span(e_1, \dots, e_n)$. Computing the operators $J_Z$ as in \eqref{eq:defJZ} for the inner product $\ip$ we obtain $\cJ = \oplus_{\mu=1}^p \so(\ag_\mu, \ip)$, where $\ag_\mu \subset \ag$ are mutually orthogonal subspaces. The bilinear form $B(K_1, K_2)=-\frac12 \Tr(K_1K_2)$ on $\cJ$ is proportional to the restriction to $\cJ$ of the Killing form of $\so(\ag, \ip)$ and satisfies $\<Z,W\>=B(J_Z, J_W)$ for $Z, W \in \z$. Consider an arbitrary decomposition of the (reductive) Lie algebra $\cJ$ into the $B$-orthogonal sum of ideals and let an operator $\Phi \in \End(\cJ)$ be the linear combination of the projections from $\cJ$ to those ideals, with positive coefficients. Define a new inner product $\ipr$ on $\n$ by $(X+Z, Y+W)=\<X,Y\> + B(\Phi J_Z, J_W)$ for $X,Y \in \ag, \; Z, W \in \z$ (note that if we choose $\Phi=\id$, then $\ipr=\ip$). Any inner product $\ipr$ constructed in this way is called \emph{semi-standard}; we also call \emph{semi-standard} the corresponding left-invariant metric on the Lie group $G$ associated with $\cG$. Note that the restriction of a semi-standard inner product to $\z$ is an invariant inner product in the sense of \cite[Definition~2.6]{G}. We give an example of a semi-standard inner product in Section~\ref{s:pre}.

We prove the following theorem.

\begin{theorem*} 
Suppose $G$ is a Lie group associated with a graph $\cG$ and equipped with a left-invariant metric $g$. The following three statements are equivalent.
\begin{enumerate} [label=\emph{(\roman*)},ref=\roman*]
  \item \label{it:gom}
   The metric Lie group $(G, g)$ is a geodesic orbit manifold.

  \item \label{it:nrm}
  The metric Lie group $(G, g)$ is a naturally reductive manifold.

  \item \label{it:Ksemi}
  The graph $\cG$ is the disjoint union of complete graphs \emph{(}and so $G$ is the direct product of two-step free groups and an abelian group\emph{)} and $g$ is a semi-standard metric.
\end{enumerate}
\end{theorem*}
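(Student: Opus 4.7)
The implication $(\ref{it:nrm}) \Rightarrow (\ref{it:gom})$ is the classical fact already quoted in the introduction, so the substance of the theorem lies in $(\ref{it:Ksemi}) \Rightarrow (\ref{it:nrm})$ and $(\ref{it:gom}) \Rightarrow (\ref{it:Ksemi})$.

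For $(\ref{it:Ksemi}) \Rightarrow (\ref{it:nrm})$, I would use that, when $\cG$ is the disjoint union of complete graphs $K_{n_1}, \ldots, K_{n_p}$ together with some isolated vertices, the algebra $\n$ decomposes as an orthogonal direct sum of an abelian factor and free two-step nilpotent algebras $\n_\mu$ on $n_\mu$ generators, and the corresponding simply-connected Lie group is the direct product. Each $\cJ_\mu = \so(\ag_\mu, \ip)$ acts on $\n_\mu$ by skew-symmetric derivations, so one can enlarge $G$ by $H \subset \bigoplus_\mu \SO(\ag_\mu)$, the connected subgroup respecting the $B$-orthogonal decomposition of $\cJ$ used to define the rescaling $\Phi$. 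Taking $\p=\n$ as the reductive complement of $\h = \Lie(H)$, the naturally reductive identity $([X,Y]_\p, X)=0$ follows from the $\Ad(H)$-invariance of $B$ together with the definition $(Z,W)=B(\Phi J_Z, J_W)$: the rescaling by $\Phi$ is designed precisely so that each $J_Z$-term can be absorbed into a derivation from $\h$.

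The hard direction is $(\ref{it:gom}) \Rightarrow (\ref{it:Ksemi})$. My starting point is Gordon's structural criterion \cite{G}: a two-step nilpotent metric Lie algebra $(\n, \ipr)$ is GO if and only if for every $X \in \ag$ and every $Z \in \z$ there exists a skew-symmetric derivation $D \in \so(\n, \ipr) \cap \Der(\n)$ with $D(X) = J_Z X$ (acting appropriately on $\z$). I apply this with $X = e_i$ and $Z = z_\a$ for an edge $E_\a = V_iV_j$; in a standard basis $J_{z_\a} e_i$ is a nonzero multiple of $e_j$ (twisted by the action of $\Phi$). Using the description of $\so(\n,\ipr) \cap \Der(\n)$ from Section~\ref{s:pre}, such a derivation can send $e_i$ to a vector with nontrivial $e_j$-component only when $V_i$ and $V_j$ have identical sets of neighbours apart from one another, i.e.\ when they are \emph{true twins} in the graph-theoretic sense. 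Thus every pair of adjacent vertices must be true twins; adjacency is then transitive on the non-isolated vertex set, and $\cG$ is forced to be a disjoint union of complete graphs.

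Once the graph is known to be $\bigsqcup K_{n_\mu}$, the final step is to show that the inner product on $\z$ necessarily has the semi-standard form $(Z,W) = B(\Phi J_Z, J_W)$. Reapplying the GO criterion inside each simple block $\n_\mu$ — and using that $\cJ_\mu = \so(\ag_\mu)$ is simple for $n_\mu \neq 4$ while $\so(4) = \so(3) \oplus \so(3)$ splits as two ideals — I expect to see that, on each $B$-irreducible ideal of $\cJ$, the restriction of $\ipr$ must be a positive multiple of $B$; the freedom in choosing these positive scalars across the ideals is precisely the freedom in choosing $\Phi$. I expect the principal obstacle to be the structural step in paragraph three: ruling out every non-twin adjacent pair requires a careful enumeration of the skew derivations of $\n$ allowed by the graph's combinatorics, and a clean way to separate the contributions of the twisting operator $\Phi$ from those of the graph structure itself.
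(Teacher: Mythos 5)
Your overall architecture matches the paper's: \eqref{it:nrm} $\Rightarrow$ \eqref{it:gom} is classical, \eqref{it:Ksemi} $\Rightarrow$ \eqref{it:nrm} is a verification against Gordon's natural-reductivity criterion (your ``true twins'' reformulation of the Dani--Mainkar relation $\sim$ is exactly right, and your direct observation that adjacent-pairs-are-twins forces a disjoint union of cliques replaces the paper's citation of \cite[Remark~4.6]{DM}). One caution on the first step of \eqref{it:gom} $\Rightarrow$ \eqref{it:Ksemi}: at that stage the inner product is arbitrary, so $J_{z_\a}e_i$ is \emph{not} a multiple of $e_j$ and there is no $\Phi$ yet; you must introduce the symmetric positive operators $A$ on $\ag$ and $C$ on $\z$ comparing $\ipr$ with a standard $\ip$, use $J_Z=A^{-1}J^0_{CZ}$ with $Z=C^{-1}z_\a$, and get the contradiction from $\<A^{-1}e_j,e_j\>\ne 0$. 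That is a repairable gap in rigor.

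The genuine gap is in the last step. A semi-standard inner product is constrained on \emph{both} factors: its restriction to $\ag$ must agree with a standard inner product (in some standard basis), and only the $\z$-part is rescaled by $\Phi$. Your sketch addresses only the $\z$-part (``the inner product on $\z$ necessarily has the form $(Z,W)=B(\Phi J_Z,J_W)$'') and silently assumes the metric is already standard on $\ag$. Proving that $A$ can be normalised to the identity is where most of the paper's technical work lies: one needs a \emph{generic} $Z$ (maximal rank, simple nonzero spectrum --- Lemma~\ref{l:generic}), the identification of the centraliser of $J_Z$ in $\so(\ag,\ipr)$ as $\Span(J_Z,J_Z^3,\dots,J_Z^{r-1})\oplus\kg$, and the resulting normal form $D_\ag(Z,X)=J_Z+Q(Z,X)$ on a dense set of pairs $(Z,X)$; from $J_ZD_\ag=J_Z^2$ one then extracts $\<A^{-1}\ag_\mu,\ag_\nu\>=0$ and hence $A$-invariance of the blocks, after which the standard basis can be re-chosen so that $A=\id$. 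Without this, the subsequent $\z$-analysis does not even get off the ground, because the identity $J^0_{D_\z^tW}=[J^0_W,J^0_{CZ}]$ (whence $[J^0_W,J^0_{CW}]=0$ and the fact that the eigenspaces of $\Phi$ are $B$-orthogonal ideals) uses $A=\id$. Also, your expectation that $\ipr$ restricted to each $B$-irreducible ideal is a multiple of $B$ is not quite the right statement: on the abelian part of $\cJ$ (a sum of $\so(2)$'s) the decomposition into one-dimensional ideals is itself part of the data, as the paper's example with the parameter $\theta$ shows; the correct conclusion is only that $\Phi$ commutes with every $\ad_K$, so that \emph{some} $B$-orthogonal decomposition into ideals diagonalises it.
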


Note that we consider isolated vertices as complete graphs on a single vertex. We also note that if the graph $\cG$ is connected and $n=|\cE| \ne 4$, then the metric $g$ in \eqref{it:Ksemi} of the Theorem is in fact proportional to a standard metric. Indeed, in that case we have $\cJ=\so(\ag)$ which is simple for $n > 4$ and $n=3$ and is one-dimensional for $n=2$.

\section{Preliminaries and example}
\label{s:pre}


Let $(G,g)$ be a simply connected two-step nilmanifold equipped with a left-invariant metric $g$. Let $\n=\Lie(G)$ and let $\ipr$ be the corresponding inner product on $\n$. As in Section~\ref{s:intro}, denote $\z = [\n, \n], \; \ag = \z^\perp$, introduce the operators $J_Z, \; Z \in \z$, by \eqref{eq:defJZ} and denote $\cJ \subset \so(\ag)$ their span. For $D \in \End(\n)$ define $D_\ag \in \End(\ag)$ by $D_\ag X = \pi_\ag DX$, where $X \in \ag$ and $\pi_\ag$ is the orthogonal projection to $\ag$.

The following description of the algebra $\Der(\n)$ of derivations of $\n$ is well known (note that any derivation maps $\z$ to $\z$ and that any endomorphism $D$ mapping $\n$ to $\z$ and $\z$ to zero is a derivation).

\begin{lemma} \label{l:der}
Let $D \in \End(\n)$. Then
\begin{enumerate} [label=\emph{(\alph*)},ref=\alph*]
  \item \label{it:dergen}
  $D \in \Der(\n)$ if and only if for all $Z \in \z$ we have $J_ZD_\ag+D_\ag^*J_Z=J_{D^*Z}$, where the asterisk denotes the metric conjugation.

  \item \label{it:derskew} 
  $D \in \Der(\n) \cap \so(\n)$ if and only if both $\ag$ and $\z$ are $D$-invariant and for all $Z \in \z$ we have $J_{DZ}=[D_\ag,J_Z]$.
\end{enumerate}
\end{lemma}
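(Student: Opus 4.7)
The plan is to translate the Leibniz identity $D[X,Y]=[DX,Y]+[X,DY]$ directly through the defining property $(J_ZX,Y)=(Z,[X,Y])$ of $J$. Since $\z$ is central and (as the paper records) any derivation satisfies $D\z\subset\z$, every instance of the identity is automatic except when $X,Y\in\ag$. Moreover, centrality of $\z$ lets me replace $DX$ and $DY$ by their $\ag$-projections $D_\ag X, D_\ag Y$ without changing $[DX,Y]$ and $[X,DY]$. So (a) reduces to encoding the single identity $D[X,Y]=[D_\ag X,Y]+[X,D_\ag Y]$, $X,Y\in\ag$, as an operator equation on $\ag$.

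To carry out this encoding I would pair with an arbitrary $Z\in\z$. The right-hand side becomes $(J_Z D_\ag X,Y)+(J_Z X,D_\ag Y)=((J_Z D_\ag + D_\ag^* J_Z)X,Y)$, while the left-hand side equals $([X,Y],D^*Z)$; because $[X,Y]\in\z$, only the $\z$-component of $D^*Z$ matters, and that contribution reads $(J_{D^*Z}X,Y)$, interpreting $J$ as extended by zero on $\ag$. Arbitrariness of $X,Y\in\ag$ gives the operator identity $J_Z D_\ag + D_\ag^* J_Z = J_{D^*Z}$, and the steps reverse under the assumption $D\z\subset\z$.

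For (b), assuming in addition $D\in\so(\n)$, the condition $D\z\subset\z$ combined with $D^*=-D$ forces $\ag=\z^\perp$ to be $D$-invariant as well, so $D_\ag=D|_\ag\in\so(\ag)$, giving $D_\ag^*=-D_\ag$ and $D^*Z=-DZ\in\z$. Substituting these into the identity of (a) collapses it to $J_Z D_\ag - D_\ag J_Z = -J_{DZ}$, i.e.\ $J_{DZ}=[D_\ag,J_Z]$. The converse runs backwards: invariance of $\ag$ and $\z$ plus skew-symmetry of $D_\ag$ and $D|_\z$ translates the formula into the identity of (a), whence $D$ is a derivation by the first part.

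The main (mild) obstacle is the bookkeeping of $D^*Z$ in (a) when $D$ is not yet known to preserve $\z$: the $\ag$-component of $D^*Z$ is harmless precisely because it pairs with $[X,Y]\in\z$ to zero, which is exactly the reason $J_{D^*Z}$ can be read via the natural extension $J|_\ag=0$. In (b), skew-symmetry removes this ambiguity altogether, so the computation becomes essentially a one-line substitution into the formula from (a).
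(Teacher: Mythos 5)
Your proof is correct. The paper offers no argument for Lemma~\ref{l:der} at all --- it records the description of $\Der(\n)$ as well known --- so there is nothing to diverge from; your direct translation of the Leibniz identity through \eqref{eq:defJZ}, using centrality of $\z$ to reduce to $X,Y\in\ag$ and to replace $DX,DY$ by $D_\ag X, D_\ag Y$ inside brackets, is the standard (and essentially the only) verification. One point deserves to be made explicit, and you rightly sense it in your closing paragraph: the ``if'' directions of both parts are literally false without the side hypotheses you quietly insert. Pairing with $Z\in\z$ only recovers $\pi_\z D[X,Y]$, so the operator identity in part~(\ref{it:dergen}) alone does not force $D\z\subset\z$: for instance $D|_\ag=0$ together with any nonzero $D|_\z:\z\to\ag$ gives $D_\ag=0$ and $D^*Z=0$ for $Z\in\z$, hence satisfies the identity, yet is not a derivation. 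Likewise in part~(\ref{it:derskew}) the stated conditions do not by themselves force $D\in\so(\n)$: take $D=\id$ on $\ag$ and $0$ on $\z$, which preserves both subspaces and satisfies $J_{DZ}=0=[D_\ag,J_Z]$ but is neither skew nor a derivation. Your reverse implications correctly assume $D\z\subset\z$ in (a) and skew-symmetry of $D|_\ag$ and $D|_\z$ in (b); these are the hypotheses under which the lemma is true and under which the paper actually applies it, so the defect lies in the literal statement, not in your argument. The individual computations --- reading $(D^*Z,[X,Y])$ as $(J_{D^*Z}X,Y)$ via the $\z$-component of $D^*Z$, deducing $D\ag\subset\ag$ from $D\z\subset\z$ and $D^*=-D$, and the collapse $D^*Z=-DZ$, $D_\ag^*=-D_\ag$ in part (b) --- all check out.
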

Note that every derivation $D$ of $\n$ which keeps $\ag$ and $\z$ invariant is uniquely determined by $D_\ag$: the restriction of $D$ to $\z$ can be found from Lemma~\ref{l:der}\eqref{it:dergen} as $J$ is injective, and so to find all such derivations it suffices to find all $T \in \End(\ag)$ such that $J_ZT + T^*J_Z \in \cJ$.

Geodesic orbit and naturally reductive nilmanifolds are characterised as follows (recall that any naturally reductive homogeneous Riemannian manifold is geodesic orbit).

\newpage

\begin{proposition} \label{prop:gordon}
Let $(G, g)$ be a nilmanifold with a left-invariant metric.
\begin{enumerate} [label=\emph{(\alph*)},ref=\alph*]
  \item \label{it:twostep} If $(G, g)$ is geodesic orbit manifold, then $G$ is two-step \cite[Theorem~2.2]{G}.

  \item \label{it:gogordon} $(G, g)$ is a geodesic orbit manifold if and only if, for every $X \in \ag$ and $Z \in \z$ there exists $D \in \Der(\n) \cap \so(\n)$ such that $DZ = 0$ and $DX = J_Z X$ \cite[Theorem~2.10]{G}.

  \item \label{it:nrgordon} $(G, g)$ is a naturally reductive manifold if and only if $\cJ$ is a subalgebra of $\so(\ag)$ and $J^{-1} \circ \ad_{J_Z} \circ J \in \so(\z)$, for every $Z \in \z$ \cite[Theorem~2.5]{G}.
\end{enumerate}

\end{proposition}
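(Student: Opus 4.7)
Each statement in Proposition~\ref{prop:gordon} is due to Gordon and proved in \cite{G}; my plan is to derive each directly from the general definitions recalled in the introduction. The common setup is that the isometry Lie algebra of a simply connected two-step nilmanifold $(G, g)$ is isomorphic to $\n \rtimes \kg$, where $\kg := \Der(\n) \cap \so(\n)$ is the Lie algebra of the stabiliser of the identity in $I_0(G)$.

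For \eqref{it:gogordon} I would take the reductive decomposition $\g = \kg \oplus \n$ with $\p = \n = \ag \oplus \z$ and $\h = \kg$. For $X = X_\ag + X_\z \in \p$ and $D \in \h$, the bracket in the semidirect product expands as $[X+D, Y]_\p = [X_\ag, Y_\ag] + D(Y)$, and a short computation using that $D$ is skew, that $[X_\ag, Y_\ag] \in \z$, and that $\ag \perp \z$ gives
\[
([X+D, Y]_\p, X) = \langle J_{X_\z} X_\ag - D X_\ag, Y_\ag\rangle - \langle D X_\z, Y_\z\rangle.
\]
Vanishing for every $Y \in \p$ is therefore equivalent to $D X_\ag = J_{X_\z} X_\ag$ together with $D X_\z = 0$, which after relabelling $X_\ag \to X$ and $X_\z \to Z$ is precisely the claimed condition.

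For \eqref{it:nrgordon} I would run the analogous computation for the naturally reductive condition $([X, Y]_\p, X) = 0$, now considering all $\Ad(H)$-invariant complements $\p$ of $\h$ in $\g$, parametrised by $\Ad(H)$-equivariant linear maps $\phi : \n \to \h$. After the $\ag \oplus \z$ decomposition and some bookkeeping, the naturally reductive condition collapses to the pair of requirements that $\cJ$ be a subalgebra of $\so(\ag)$ and that $\ad_{J_Z}$ be skew with respect to the inner product that $J$ transports from $\z$ to $\cJ$. Finally, \eqref{it:twostep} is the deepest of the three; the natural approach exploits that every $D \in \kg$ preserves the descending central series of $\n$, so if $\n$ has at least three nonzero filtration levels, the compatibility demanded by \eqref{it:gogordon} for an element with nontrivial projections into two adjacent levels cannot be simultaneously satisfied. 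The main obstacles I anticipate are coordinating the family of complements in \eqref{it:nrgordon} so that no spurious conditions appear, and carrying through the detailed filtration analysis needed for \eqref{it:twostep}.
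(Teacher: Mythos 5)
The paper does not actually prove Proposition~\ref{prop:gordon}: all three parts are quoted verbatim from Gordon's work with precise citations, so any derivation you give is supplementary. Measured against that, your treatment of part (b) is essentially correct and complete, with one hidden but essential input: the identification of the isotropy algebra of the full isometry group with $\kg=\Der(\n)\cap\so(\n)$ (equivalently, $I_0(G)\cong G\rtimes K$ with $K$ the group of orthogonal automorphisms) is Wilson's theorem on isometry groups of nilmanifolds, and it is the real content behind the reduction of the Kowalski--Vanhecke criterion to the stated condition; it should be cited, not assumed. You also implicitly use that a skew-symmetric derivation preserves both $\ag$ and $\z$ (so that $(DY,X)$ splits as you claim); this is Lemma~\ref{l:der} and is worth flagging.

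Parts (a) and (c) are plans rather than proofs, and in both cases the elided step is where the work lies. For (c), ``some bookkeeping'' hides the entire argument: one must actually exhibit, for a given $\cJ$ satisfying the two conditions, an $\Ad(K)$-invariant complement $\p$ (a graph of an equivariant map $\n\to\kg$, necessarily different from $\n$ itself unless the metric is flat in the relevant directions) on which $([X,Y]_\p,X)=0$ holds, and conversely show that the existence of any such complement forces $\cJ$ to close under brackets and $\ad_{J_Z}$ to be skew in the transported metric; neither direction is a formality, and Gordon's proof of Theorem~2.5 is a genuine computation. For (a), your proposed argument does not work as stated: the criterion in (b) is only formulated for two-step algebras (the operators $J_Z$ are defined using $\z=[\n,\n]$ and require $\z$ to be central), so you cannot invoke it to rule out higher-step algebras. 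What is available for a general nilmanifold is only the raw condition that for every $X\in\n$ there exists $D\in\kg$ with $DX=\ad_X^*X$, and Gordon's proof that this forces $[\n,[\n,\n]]=0$ is a careful polarisation argument with the lower central series, not a one-line incompatibility between ``adjacent filtration levels''. As written, part (a) of your proposal is a conjecture about how the proof might go, and I would not accept it as a replacement for the citation.
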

Note that by Lemma~\ref{l:der}\eqref{it:derskew} and since $J$ is injective, the condition $DZ = 0$ from \eqref{it:gogordon} is equivalent to the fact that $[D_\ag, J_Z]=0$. For further results on the metric Lie algebra of a geodesic orbit nilmanifold we refer the reader to \cite[Section~2]{G} and \cite[Proposition~3.2]{GN}.

\medskip

Let $\n$ be a two-step nilpotent algebra associated with a graph $\cG$. Choose a standard basis $\{e_1, \dots, e_n, z_1, \dots, z_m\}$ for $\n$ and equip $\n$ with the standard inner product corresponding to that basis. Let $M_{ij} \in \End(\ag)$ be defined by $M_{ij}e_i=e_j$ and $M_{ij}e_k =0$ for $k \ne i$. We have $\z=\Span(z_1, \dots, z_m), \; \ag=\Span(e_1, \dots, e_n)$ and $J_{z_\a} = M_{ij} - M_{ji}$, where $E_\a=V_iV_j$ and $i<j$.

Following \cite{DM}, for $i=1, \dots, n$ we denote $\Omega'_i = \{j \, | \, V_iV_j \in \cE\}$ and $\Omega_i = \Omega'_i \cup \{i\}$. The relation $\preceq$ on the set $\{1,2, \dots, n\}$ defined by $i \preceq j \; \Leftrightarrow \; \Omega'_i \subset \Omega_j$ is transitive: if $i \preceq j$ and $j \preceq k$, then $i \preceq k$. Then the relation $i \sim j \; \Leftrightarrow \; (i \preceq j \; \& \; j \preceq i)$ is an equivalence relation; moreover, the relation $\preceq$ descends to the set of equivalence classes where it becomes a partial order.

The algebra $\Der(\n)$ is characterised by Lemma~\ref{l:der}\eqref{it:dergen} and the following Proposition.
\begin{proposition}[{\cite[Theorem~4.2]{DM}}] \label{prop:dm}
$\{D_\ag \, | \, D \in \Der(\n)\} = \Span(M_{ij} \, | \, i \preceq j)$.
\end{proposition}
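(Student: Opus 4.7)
The plan is to apply Lemma~\ref{l:der}\eqref{it:dergen}, which says that $T \in \End(\ag)$ is of the form $D_\ag$ for some $D \in \Der(\n)$ if and only if $J_Z T + T^* J_Z \in \cJ$ for every $Z \in \z$. Because $J_Z$ is skew-symmetric the sum on the left is automatically skew-symmetric, and $\cJ$ is spanned by the skew elementary matrices $M_{kl} - M_{lk}$ for $V_kV_l \in \cE$. The condition thus becomes: for every edge $E_\a = V_kV_l$ and every non-edge pair $\{r,s\}$, the $(r,s)$-entry of $J_{z_\a} T + T^* J_{z_\a}$ must vanish. Writing $T = \sum_{p,q} t_{pq}M_{pq}$ and expanding with the aid of the identity $M_{ab}M_{cd} = \delta_{ad}M_{cb}$ produces
\begin{equation*}
J_{z_\a} T + T^* J_{z_\a} = \sum_p t_{pk}(M_{pl}-M_{lp}) - \sum_p t_{pl}(M_{pk}-M_{kp}),
\end{equation*}
so that the coefficient of $M_{rs}-M_{sr}$ (for $r<s$) on the right is $t_{rk}\delta_{sl} - t_{sk}\delta_{rl} - t_{rl}\delta_{sk} + t_{sl}\delta_{rk}$.

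For the inclusion $\Span(M_{ij} \mid i\preceq j) \subset \{D_\ag \mid D \in \Der(\n)\}$ (sufficiency), I check that each generator $M_{ij}$ with $i \preceq j$ satisfies the condition. Substituting $T = M_{ij}$ into the displayed formula, a short case analysis shows that the sum vanishes unless one of $V_i, V_j$ is an endpoint of $E_\a$; that when $E_\a = V_iV_j$ itself the two terms cancel; and that in the remaining case the sum equals $\pm(M_{ih}-M_{hi})$ for $V_h$ the other endpoint of $E_\a$. The relation $\Omega'_i \subset \Omega_j$ is precisely what is needed to ensure $V_iV_h \in \cE$ whenever this case actually arises, which places $\pm(M_{ih}-M_{hi})$ in $\cJ$.

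For the reverse inclusion (necessity), I argue the contrapositive: suppose $T$ satisfies the condition and $t_{ij} \ne 0$ for some pair with $i \not\preceq j$. Then some $h \in \Omega'_i \setminus \Omega_j$ exists (so $V_iV_h \in \cE$, $V_jV_h \notin \cE$, and $h \neq j$). Substituting $E_\a = V_iV_h$ and the non-edge $\{r,s\}=\{j,h\}$ into the coefficient formula yields a linear relation in which exactly one of the four delta factors picks out $t_{ij}$, while the other three involve coefficients $t_{pq}$ indexed by pairs that are smaller in the partial order induced by $\preceq$. A descending induction over this partial order, using transitivity of $\preceq$ at each step, forces $t_{ij} = 0$ and thereby concludes $T \in \Span(M_{ij} \mid i\preceq j)$.

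The principal obstacle is the inductive step in the necessity direction: each delta identity couples multiple coefficients, so one must organize the induction along the equivalence classes of $\preceq$ and choose test pairs $(E_\a, \{r,s\})$ that separate them cleanly. The base case, namely pairs $(i,j)$ whose class is maximal among the obstructing ones, must be handled by a direct argument, after which the transitivity of $\preceq$ noted in the excerpt is the key combinatorial ingredient propagating the vanishing and closing the induction.
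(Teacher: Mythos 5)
The paper itself does not prove this proposition --- it is imported from \cite[Theorem~4.2]{DM} --- so your plan of a direct verification via Lemma~\ref{l:der}\eqref{it:dergen} is a legitimate route, and both your displayed formula for $J_{z_\a}T+T^*J_{z_\a}$ and the resulting coefficient identity are correct. The gap is in how you then read the conditions off: the roles of $i$ and $j$ get interchanged at the decisive step, and this changes the answer. With the paper's convention $M_{ij}e_i=e_j$, substituting $T=M_{ij}$ (so $t_{pq}=\delta_{pi}\delta_{qj}$) into your formula for the edge $E_\a=V_kV_l$ gives
\[
J_{z_\a}M_{ij}+M_{ij}^*J_{z_\a}=\delta_{kj}(M_{il}-M_{li})-\delta_{lj}(M_{ik}-M_{ki}),
\]
which is nonzero only when $V_j$ (not $V_i$) is an endpoint of $E_\a$ and the other endpoint $V_h$ satisfies $h\ne i$. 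That $V_h$ is then a neighbour of $V_j$, and membership of $\pm(M_{ih}-M_{hi})$ in $\cJ$ requires $V_iV_h\in\cE$; the condition guaranteeing this for every edge at $V_j$ is $\Omega'_j\subset\Omega_i$, i.e.\ $j\preceq i$ --- the reverse of the inclusion $\Omega'_i\subset\Omega_j$ you invoke. The same flip occurs in your necessity step: with $h\in\Omega'_i\setminus\Omega_j$, the test pair (edge $V_iV_h$, non-edge $\{j,h\}$) makes three of the four delta factors vanish outright (since $h\ne i$, $h\ne j$, $i\ne j$), so the coefficient collapses to $\pm t_{ji}$ alone and forces $t_{ji}=0$, not $t_{ij}=0$. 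In particular there is no coupling between coefficients, the claim that the remaining terms are ``smaller in the partial order'' is unfounded, and the descending induction you flag as the principal obstacle is not needed at all.

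These two flips are consistent with each other and point to a real issue: with the paper's stated conventions the correct identity is $\{D_\ag \mid D\in\Der(\n)\}=\Span(M_{ij}\mid j\preceq i)$, i.e.\ the printed statement has the indices transposed (presumably a convention mismatch with \cite{DM}; the later use in the paper is unaffected, since there one only needs $\<De_i,e_j\>=0$ for a suitable ordering of a pair with $i\not\sim j$). A concrete check: for the path on $V_1,V_2,V_3$ with edges $V_1V_2$ and $V_2V_3$ one has $1\preceq 2$, yet $e_1\mapsto e_2$ is the $\ag$-component of no derivation, since it would force $0=D[e_1,e_3]=[e_2,e_3]=z_2$. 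Your write-up makes the transposition twice and so lands on the printed formula, but each of the two implications as argued is false. Redo the case analysis keeping careful track of which vertex is the source and which is the target of $M_{ij}$, and you obtain a complete --- and, in the necessity direction, induction-free --- proof of the corrected statement.
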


We finish this section with an example of a semi-standard but not standard inner product on a nilpotent algebra associated with a graph.

\begin{example*} Consider the graph $\cG$ with $12$ vertices and $11$ edges shown in Figure~\ref{fig:graph}.

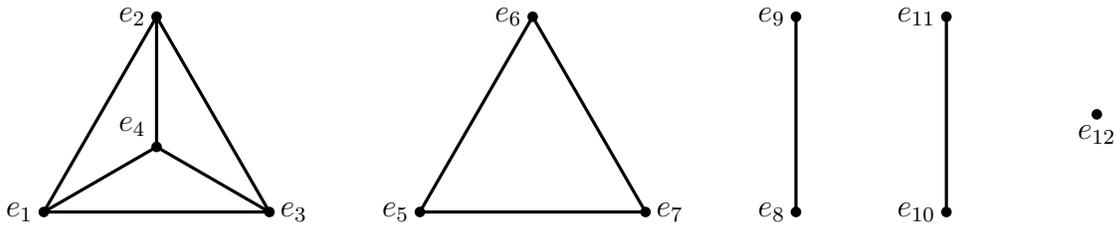
\begin{figure}[h]
\begin{tikzpicture}
\node[coordinate] (E1) at (0,0) [label=180:$e_1$] {};
\node[coordinate] (E3) at (3,0) [label=0:$e_3$] {};
\node[coordinate] (E2) at (1.5, {1.5*sqrt(3)}) [label=180:$e_2$] {};
\node[coordinate] (E4) at (1.5,{0.5*sqrt(3)}) [label=160:$e_4$] {};
\node[coordinate] (E5) at (5,0) [label=180:$e_5$] {};
\node[coordinate] (E7) at (8,0) [label=0:$e_7$] {};
\node[coordinate] (E6) at (6.5, {1.5*sqrt(3)}) [label=180:$e_6$] {};
\node[coordinate] (E8) at (10,0) [label=180:$e_8$] {};
\node[coordinate] (E9) at (10,{1.5*sqrt(3)}) [label=180:$e_9$] {};
\node[coordinate] (E10) at (12,0) [label=180:$e_{10}$] {};
\node[coordinate] (E11) at (12,{1.5*sqrt(3)}) [label=180:$e_{11}$] {};
\node[coordinate] (E12) at (14,{0.75*sqrt(3)}) [label=-90:$e_{12}$] {};
\foreach \x in {E1,E2,E3,E4,E5,E6,E7,E8,E9,E10,E11,E12} {\fill (\x) circle (2.0pt);}
\draw [very thick] (E1) -- (E2) -- (E3) -- (E4) --(E1); \draw [very thick] (E4) -- (E2); \draw [very thick] (E1) -- (E3);
\draw [very thick] (E5) -- (E6) -- (E7) -- (E5);
\draw [very thick] (E8) -- (E9); \draw [very thick] (E10) -- (E11);
\end{tikzpicture}
\caption{The graph $\cG = K_4 \sqcup K_3 \sqcup K_2 \sqcup K_2 \sqcup K_1$.}
\label{fig:graph}
\end{figure}
With the labelling of the vertices as in Figure~\ref{fig:graph} we get a standard basis $\{e_i, z_\a\}$ for the Lie algebra $\n$ associated with $\cG$ and the following bracket relations:
\begin{align*}
  [e_1, e_2]=z_1, & & [e_1, e_4]=z_4, & & [e_5, e_6]=z_7, & & [e_8, e_9]=z_{10}, \\
  [e_2, e_3]=z_2, & & [e_2, e_4]=z_5, & & [e_6, e_7]=z_8, & & [e_{10}, e_{11}]=z_{11},\\
  [e_1, e_3]=z_3, & & [e_3, e_4]=z_6, & & [e_5, e_7]=z_9.
\end{align*}
Let $\ip$ be the standard inner product on $\n$ which corresponds to that standard basis. Then $\cJ = \so(4) \oplus \so(3) \oplus \so(2) \oplus \so(2)$ and we have a $B$-orthogonal decomposition $\cJ = \ig_1 \oplus \ig_2 \oplus \ig_3 \oplus \ig_4 \oplus \ig_5$ into the sum of irreducible ideals given by
\begin{gather*}
\ig_1 \! =\Span(J_1+J_6, J_2+J_4, J_5-J_3) \simeq \so(3), \ig_2 \! =\Span(J_1-J_6, J_2-J_4, J_5+J_3) \simeq \so(3), \\
\ig_3=\Span(J_7, J_8, J_9) \simeq \so(3), \; \ig_4= \br (\cos \theta J_{10} + \sin \theta J_{11}), \; \ig_5= \br (-\sin \theta J_{10} + \cos \theta J_{11}),
\end{gather*}
for an arbitrary $\theta \in \br$ (where we abbreviated $J_{z_i}$ to $J_i$). The subspaces $\z_k = J^{-1} \ig_k$ are given by
\begin{gather*}
\z_1=\Span(z_1+z_6, z_2+z_4, z_5-z_3), \; \z_2=\Span(z_1-z_6, z_2-z_4, z_5+z_3), \\
\z_3=\Span(z_7, z_8, z_9), \; \z_4= \br (\cos \theta z_{10} + \sin \theta z_{11}), \; \z_5= \br (-\sin \theta z_{10} + \cos \theta z_{11}).
\end{gather*}
We can define a family of semi-standard inner products $\ipr$ on $\n$ such that the decomposition $\n=\ag \oplus \z_1 \oplus \z_2 \oplus \z_3 \oplus \z_4 \oplus \z_5$ is orthogonal, $\ipr_{|\ag} = \ip_{|\ag}$, and $\ipr_{|\z_k} = \lambda_k \ip_{|\z_k}$, where $\lambda_k > 0$ for $k=1, \dots, 5$.
\end{example*}

\section{Proof of the Theorem}
\label{s:proof}

We start with proving the implication \eqref{it:gom} $\Rightarrow$ \eqref{it:Ksemi}. 

Suppose $\n$ is a Lie algebra associated with a graph $\cG$ and equipped with an inner product $\ipr$ such that $(\n,\ipr)$ generates a geodesic orbit metric $g$ on $G$. Define $\z$ and $\ag$ as in Section~\ref{s:intro}. For any choice of a standard basis $\{e_1, \dots, e_n, z_1, \dots, z_m\}$, we have $\z=\Span(z_1, \dots, z_m)$, and then adding any linear combinations of the $z_\a$'s to the $e_i$'s we get another standard basis, so we can choose a standard basis such that $\ag=\Span(e_1, \dots, e_n)$ (later in the proof, we will specify the standard basis further). Let $\ip$ be the standard inner product corresponding to the chosen standard basis and let $A \in \End(\ag)$ and $C \in \End(\z)$ be defined by $(AX,Y) = \<AX,Y\>$ and $(CZ,W) = \<CZ,W\>$, for all $X, Y \in \ag, \; Z, W \in \z$. The endomorphisms $A$ and $C$ are symmetric relative to the restrictions of $\ip$ to $\ag$ and to $\z$ respectively and are positive definite.

For $Z \in \z$ we have
\begin{equation}\label{eq:JJ0}
J_Z = A^{-1} J^0_{CZ},
\end{equation}
where $J$ and $J^0$ are computed relative to the inner products $\ipr$ and $\ip$ respectively.

Now suppose the graph $\cG$ contains an edge $E_\a=V_i V_j$ such that $i \not\sim j$. We have $J^0_{z_\a}=\pm(M_{ij}-M_{ji})$. Without loss of generality we can assume that $i \not\preceq j$. Then by Proposition~\ref{prop:dm}, for any derivation $D$ of $\n$ we have $\<De_i, e_j\>=0$. Now let $Z= C^{-1}z_\a$, and $X=e_i$. We have $J_ZX = A^{-1} J^0_{CZ}X = A^{-1} J^0_{CC^{-1}z_\a}e_i = \pm A^{-1} e_j$ and so by Proposition~\ref{prop:gordon}\eqref{it:gogordon}, there exists $D \in \Der(\n)$ such that $De_i = \pm A^{-1} e_j$. But then $0=\<De_i, e_j\> = \pm \<A^{-1} e_j, e_j\>$, in contradiction with the fact that $A$ is positive definite.

It follows that $V_iV_j$ is an edge of $\cG$ only when $i \sim j$. But by \cite[Remark~4.6]{DM}, in every $\sim$-equivalence class, either all the vertices are pairwise adjacent, or no two vertices are adjacent, so that for every equivalence class, the induced subgraph is either complete or empty (edgeless). Therefore $\cG$ is the disjoint union of complete graphs (some of which can be isolated vertices). All the isolated vertices form a single equivalence class $\cC_0$. Denote $\cC_\mu, \; \mu = 1, \dots, p$, the other equivalence classes; all of them have at least two elements, and $\cG$ is the disjoint union of the complete graphs on the sets $\cC_\mu, \; \mu = 1, \dots, p$, and the isolated vertices from $\cC_0$.

We can now specify the standard basis further. The subspace $\ag_0=\Span(e_i \, | \, i \in \cC_0) \subset \ag$ is central in $\n$. It follows that adding arbitrary vectors from $\ag_0$ to the elements $e_i$ of a standard basis produces another standard basis. We can therefore assume that our standard basis is chosen in such a way that the restrictions of the inner products $\ipr$ and $\ip$ to $\ag_0$ coincide and that the orthogonal complements to $\ag_0$ in $\ag$ relative to both $\ipr$ and $\ip$ are the same.

For $\mu =1, \dots, p$ define the subspaces $\ag_\mu=\Span(e_i \, | \, i \in \cC_\mu) \subset \ag$. Denote $\cJ^0:=\Span(J^0_Z \, | \, Z \in \z)$. We summarise the above in the following lemma.

{
\begin{lemma} \label{l:orth}
The direct decomposition $\ag = \ag_0 \oplus \ag_1 \oplus \dots \oplus \ag_p$ is orthogonal relative to $\ip$. We have $\cJ^0=\oplus_{\mu=1}^p \so(\ag_\mu, \ip)$ and $\{D_\ag \, | \, D \in \Der(\n)\} = \oplus_{\mu=0}^p \End(\ag_\mu)$.
\end{lemma}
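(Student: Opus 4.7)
The lemma collects three facts, and the plan is to verify each in turn using the combinatorial description of $\cG$ as a disjoint union of complete graphs on $\cC_1,\dots,\cC_p$ together with the set $\cC_0$ of isolated vertices.

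For the $\ip$-orthogonality of the decomposition $\ag=\ag_0\oplus\dots\oplus\ag_p$, I would simply observe that the $\ag_\mu$'s are spanned by pairwise disjoint subsets of the $\ip$-orthonormal basis $\{e_1,\dots,e_n\}$, so the decomposition is both direct and $\ip$-orthogonal. For the description of $\cJ^0$, the key observation is that every edge $E_\alpha=V_iV_j$ of $\cG$ lies inside a single $\cC_\mu$ with $\mu\ge 1$, since $\cC_0$ is edgeless and the components $\cC_\mu$ for $\mu\ge 1$ are disjoint complete subgraphs. Each generator $J^0_{z_\alpha}=M_{ij}-M_{ji}$ therefore acts on $\ag_\mu$ and vanishes on the remaining $\ag_\nu$'s, giving the inclusion $\cJ^0\subseteq\bigoplus_{\mu=1}^p\so(\ag_\mu,\ip)$; conversely, the completeness of $\cG|_{\cC_\mu}$ guarantees that every elementary skew operator $M_{ij}-M_{ji}$ with $i,j\in\cC_\mu$, $i<j$, arises as some $J^0_{z_\alpha}$, and these operators span $\so(\ag_\mu,\ip)$.

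For the derivation statement I would invoke Proposition~\ref{prop:dm} and compute the relation $\preceq$ explicitly in the present setting. An isolated vertex $i\in\cC_0$ satisfies $\Omega'_i=\varnothing$ and is therefore $\preceq$-comparable with every index; for $i\in\cC_\mu$ with $\mu\ge 1$, we have $\Omega'_i=\cC_\mu\setminus\{i\}$, a set of cardinality at least one, and the disjointness of the components together with the inclusion $\Omega'_i\subseteq\Omega_j$ then forces $j$ to lie in $\cC_\mu$. Translating these index pairs back through Proposition~\ref{prop:dm}, the span $\Span(M_{ij}\,|\,i\preceq j)$ collapses onto the block-diagonal subalgebra $\bigoplus_{\mu=0}^p\End(\ag_\mu)$.

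The main delicate step is the third one: the combinatorial check that no ``cross'' maps between distinct non-trivial components $\cC_\mu$ or $\cC_\nu$ can occur in $\Span(M_{ij}\,|\,i\preceq j)$ is precisely where the full structural hypothesis on $\cG$ (disjointness plus completeness of the $\cC_\mu$ with $\mu\ge 1$, and the minimality of $\cC_0$) is used. The first two assertions amount to bookkeeping once the structure of $\cG$ has been identified.
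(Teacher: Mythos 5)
Your treatment of the first two assertions is fine and is exactly what the paper intends (its proof consists of one sentence saying they ``follow from the construction of the standard basis''): the $\ag_\mu$ are spanned by disjoint subsets of the $\ip$-orthonormal basis, every edge lies inside a single class $\cC_\mu$ with $\mu\ge 1$, and completeness of the induced subgraphs makes the operators $J^0_{z_\a}=M_{ij}-M_{ji}$ run over a spanning set of elementary skew operators of each $\so(\ag_\mu,\ip)$.

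The third part, however, contains a genuine gap, and it is located precisely at the point you flag as ``bookkeeping''. You correctly observe that an isolated vertex $i\in\cC_0$ has $\Omega'_i=\varnothing$, so that $\Omega'_i\subset\Omega_j$ holds for \emph{every} $j$, i.e.\ $i\preceq j$ for all $j$. But then, by the very formula you are invoking, $\Span(M_{ij}\,|\,i\preceq j)$ contains every $M_{ij}$ with $i\in\cC_0$ and $j$ in a nontrivial class $\cC_\nu$ --- exactly the ``cross'' maps that the block-diagonal description $\oplus_{\mu=0}^p\End(\ag_\mu)$ excludes. Your concluding sentence only rules out cross maps between two \emph{distinct nontrivial} components $\cC_\mu,\cC_\nu$ with $\mu,\nu\ge 1$, which is the easy case; the case of one index in $\cC_0$ is never addressed, and your own first observation shows it cannot be dismissed. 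Nor is this a removable artifact of the orientation convention in Proposition~\ref{prop:dm}: with either convention for which of $e_i\mapsto e_j$ or $e_j\mapsto e_i$ corresponds to $i\preceq j$, a map sending a basis vector into the central subspace $\ag_0$ (or, with the stated convention, out of $\ag_0$) does extend to a derivation of $\n$ --- e.g.\ for the graph $K_2\sqcup K_1$ the endomorphism $e_1\mapsto e_3$, $e_2,e_3\mapsto 0$ is a derivation of $\mathfrak h_3\oplus\br$ --- so $\{D_\ag\,|\,D\in\Der(\n)\}$ genuinely contains a copy of $\Hom(\ag_0,\oplus_{\mu\ge1}\ag_\mu)$ or of $\Hom(\oplus_{\mu\ge1}\ag_\mu,\ag_0)$ whenever $\cC_0\ne\varnothing$ and $p\ge1$. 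To ``collapse'' onto the block-diagonal algebra you would need an extra hypothesis (for instance, that one only considers derivations that are skew-symmetric and preserve the orthogonal decomposition, which is all the paper actually uses downstream), or you must restrict to the case $\cC_0=\varnothing$; as written, the claimed equality does not follow from your computation of $\preceq$, and indeed your computation points the other way.
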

\begin{proof}
The first two assertions follow from the construction of the standard basis. The third one, from Proposition~\ref{prop:dm}.
\end{proof}
}

We will need the following linear algebraic fact. Let $r=\max (\rk J_Z \, | \, Z \in \z)$. Then there is an open, dense subset $\mU \in \z$ such that $\rk J_Z = r$ for all $Z \in \mU$ (as the condition $\rk J_Z < r$ is equivalent to a system of polynomial equations for $Z$). We say that $Z \in \z$ is \emph{generic} if $\rk J_Z = r$ and all the nonzero (complex) eigenvalues of $J_Z$ are pairwise distinct.

{
\begin{lemma}\label{l:generic}
There exist an open, dense subset $\mU' \subset \mU \subset \z$ consisting of generic vectors.
\end{lemma}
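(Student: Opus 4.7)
The plan is to reformulate genericity as a polynomial-open condition on $\z$ and then reduce density to exhibiting a single generic vector. On $\mU$ the rank $\rk J_Z = r$ is constant, and since $J_Z\in\so(\ag,\ipr)$, it is necessarily even, so the characteristic polynomial factors as
\[
\chi_{J_Z}(t) = t^{\dim\ag - r}\tilde q_Z(t^2),
\]
where $\tilde q_Z$ is a monic polynomial of degree $r/2$ in $s = t^2$ whose coefficients depend polynomially on $Z$ and whose roots are $-\lambda_j(Z)^2$, with $\pm i\lambda_j(Z)$ the nonzero eigenvalues of $J_Z$. Pairwise distinctness of these eigenvalues is equivalent to $\Delta(Z) := \operatorname{disc}_s \tilde q_Z(s)\ne 0$, a single polynomial inequality, so $\mU' = \mU\cap\{\Delta\ne 0\}$ is Zariski-open in $\z \cong \br^m$; since $\br^m$ is irreducible, $\mU'$ is dense as soon as it is non-empty.

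To produce one $Z_0 \in \mU'$, I would use \eqref{eq:JJ0} and Lemma \ref{l:orth}: $\cJ = A^{-1}\cJ^0$ with $\cJ^0 = \oplus_{\mu=1}^p\so(\ag_\mu,\ip)$ and $\dim\ag_\mu \ge 2$ for $\mu \ge 1$. Choose $K_\mu\in\so(\ag_\mu,\ip)$ of maximal rank with pairwise distinct nonzero eigenvalues, arranged so that the nonzero eigenvalues across different summands are mutually distinct; then $K := \oplus_\mu K_\mu\in\cJ^0$ has rank $r$ with all $r$ nonzero eigenvalues distinct, and $Z_0 := C^{-1}(J^0)^{-1}(K)$ satisfies $J_{Z_0} = A^{-1}K$. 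The similarity $A^{-1}K = A^{-1/2}(A^{-1/2}KA^{-1/2})A^{1/2}$ shows that $A^{-1}K$ shares its spectrum with the $\ip$-skew operator $A^{-1/2}KA^{-1/2}$, hence has pure-imaginary eigenvalues in conjugate pairs $\pm i\mu_j$. The $\mu_j$ may a priori coincide for this particular $K$, but the map $\cJ^0\ni K\mapsto\Delta(A^{-1}K)$ is polynomial, and at $A = I$ it reduces to the classical discriminant of $K$ itself, which is non-zero on a Zariski-open subset of $\cJ^0$. Propagating through the one-parameter family $A_t = (1-t)I + tA$ of positive-definite operators and exploiting polynomiality in $(K, t)$ after clearing $\det(A_t)$-denominators, the non-vanishing at $t = 0$ implies the polynomial is not identically zero on $\cJ^0$ for the specific $A$, so a generic $K\in\cJ^0$ (after perturbation within $\cJ^0$ if needed) yields the required $Z_0$.

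The main obstacle is this propagation from $A = I$ to the specific $A$ coming from $\ipr$: one must rule out the pathological scenario in which $\Delta(A^{-1}K)$ vanishes identically in $K\in\cJ^0$ for the actual $A$. The cleanest way is to track the $(K,t)$-polynomiality carefully through $A_t$, using that non-vanishing at $t = 0$ prevents a global $(t-1)^N$-type divisibility across all $K$; the large reductive structure $\cJ^0 = \oplus\so(\ag_\mu)$ with $\dim\ag_\mu\ge 2$ supplies enough linearly independent directions in $\cJ^0$ to break any potential eigenvalue coincidence induced by the twist by $A^{-1/2}$ on both sides.
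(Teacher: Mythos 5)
Your first paragraph matches the paper's reduction exactly: genericity is the non-vanishing of a discriminant, a polynomial condition on $Z$, so $\mU'$ is open and dense as soon as one generic vector exists. The gap is in the second half, where you must actually produce such a vector for the \emph{given} positive-definite $A$. You correctly identify the danger -- that conjugating $K$ by $A^{-1/2}$ may collapse eigenvalues -- but your proposed remedy does not work. Knowing that the polynomial $P(K,t)$ (obtained from $\Delta(A_t^{-1}K)$ after clearing $\det(A_t)$-denominators) is not identically zero on the slice $t=0$ tells you nothing about the slice $t=1$: the polynomial $(1-t)Q(K)$ is non-vanishing somewhere on $\{t=0\}$ yet vanishes identically on $\{t=1\}$, so non-vanishing at $t=0$ does \emph{not} ``prevent a global $(t-1)^N$-type divisibility.'' The closing appeal to the reductive structure of $\cJ^0$ supplying ``enough directions'' is not an argument; it restates the thing to be proved. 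As written, the existence of a single generic $Z$ is not established.

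The paper closes this gap with a direct perturbation construction that never interpolates between $\id$ and $A$. Writing $S=A^{-1/2}$, the operator $J_{C^{-1}W}$ is conjugate to the $\ip$-skew operator $SJ^0_WS$. One chooses elementary rank-two rotations $K_1,K_2,\dots \in \cJ^0$ (one for each coordinate $2$-plane inside each $\ag_\mu$) and builds the combination $x_1K_1+\cdots+x_rK_r$ inductively: $S(x_1K_1)S$ has exactly one conjugate pair of nonzero eigenvalues; adding $x_2K_2$ with $x_2$ small enough raises the rank by two while, by continuity of eigenvalues, the new pair appears near zero and stays distinct from the old pair, which moves only slightly. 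Iterating gives an element of $\cJ^0$ whose image under $S(\cdot)S$ has maximal rank $r$ and $r$ pairwise distinct nonzero eigenvalues, i.e.\ a generic vector, for the actual $A$. If you want to keep your structure, replace the $A_t$-propagation by this one-rotation-at-a-time continuity argument (or by some other argument that genuinely handles arbitrary positive-definite $A$).
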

\begin{proof}
First note that it is sufficient to find at least one generic $Z \in \z$. Indeed, the characteristic polynomial $\chi_Z(\lambda)$ of $J_Z$ has the form $\chi_Z(\lambda)=f_Z(\lambda) \lambda^{n-r}$, where $f_Z$ is a polynomial of degree $r$ whose coefficients depend polynomially on $Z$ and whose constant term is nonzero for $Z \in \mU$. The fact that $J_Z$ has a nonzero multiple eigenvalue is equivalent to the fact that the discriminant of $f_Z$ is zero which is equivalent to vanishing of a certain polynomial in $Z$. If the complement to the zero set of that polynomial is a non-empty subset of $\z$, then it is open and dense.

To construct a generic $Z$ we note that by \eqref{eq:JJ0} the operator $J_Z$ is conjugate to the operator $SJ^0_{CZ}S$, where $S$ is a positive definite, $\ip$-symmetric square root of $A^{-1}$. Note that $SJ^0_{CZ}S$ is skew-symmetric relative to $\ip$. Furthermore, the maximal rank $r$ of $J_Z, \; Z \in \z$, equals the maximal rank of $J^0_Z, \; Z \in \z$, which by Lemma~\ref{l:orth} equals $\sum_{\mu=1}^p r_\mu$, where $r_\mu=2\lfloor\frac12\dim \ag_\mu \rfloor$. Now for every $\mu=1, \dots, p$, choose operators $K_{\mu,j} \in \so(\ag_\mu, \ip), \; j=1, \dots, \frac12 r_\mu$ as follows. Take an $\ip$-orthonormal basis $b_1, \dots, b_{\dim \ag_\mu}$ for $\ag_\mu$ and define $K_{\mu,j}$ so that the $(2j-1,2j)$-th entry of its matrix relative to that basis is $1$, the $(2j,2j-1)$-th entry is $-1$ and all the other entries are zeros. Then any linear combination of the operators $K_{\mu,j}$ which has $0 \le s \le \frac12 r_\mu$ nonzero coefficients has rank $2s$. Now extend every operator $K_{\mu,j}$ to an operator in $\so(\ag, \ip)$ by defining it to be zero on the orthogonal complement to $\ag_\mu$ relative to $\ip$, and then label the resulting operators consecutively, $K_1, \dots, K_r$. Note that $K_i \in \cJ^0$ and that the rank of any linear combination of the operators $K_i$ which has $0 \le s \le \frac12 r$ nonzero coefficients is $2s$.

Now the operator $S(x_1K_1)S, \; x_1 \ne 0$, has rank two, with two nonzero eigenvalues $\pm \lambda_1 \mathrm{i}$. The operator $S(x_1K_1+x_2K_2)S, \; x_1, x_2 \ne 0$, has rank four. As the eigenvalues depend continuously on the coefficients, for small enough $x_2$ it has four distinct nonzero eigenvalues $\pm \lambda'_1 \mathrm{i}, \pm \lambda_2 \mathrm{i}$, where $\lambda'_1$ is close to $\lambda_1$ and $\lambda_2$ is close to zero. Repeating the argument we get a sequence of nonzero numbers $x_1, \dots, x_r \in \br$ such that the operator $S(x_1K_1+\dots+x_rK_r)S$ has rank $r$, with $r$ pairwise distinct eigenvalues. But $x_1K_1+\dots+x_rK_r \in \cJ^0$, so that $x_1K_1+\dots+x_rK_r=J^0_W$ for some $W \in \z$. By \eqref{eq:JJ0} we have $J_{C^{-1}W} = A^{-1}J^0_W = S(SJ^0_WS)S^{-1}$, and so the vector $C^{-1}W$ is generic.
\end{proof}
}

By Proposition~\ref{prop:gordon}\eqref{it:gogordon}, for any $Z \in \z$ and any $X \in \ag$, there exists $D=D(Z,X) \in \Der(\n)$ which keeps $\z$ and $\ag$ invariant, which is skew-symmetric relative to $\ipr$ and such that $J_ZX=D(Z,X)X$ and $D(Z,X)Z=0$. The latter condition is equivalent to the fact that $[J_Z,D_\ag(Z,X)]=0$. Now assume that $Z \in \mU'$. As $Z$ is generic, the centraliser of $J_Z$ in $\so(\ag, \ipr)$ is $\Span(J_Z, J_Z^3, \dots, J_Z^{r-1}) \oplus \kg$, where $\kg=\{Q \in \so(\ag, \ipr) \, | \, J_Z Q = 0\}$. It follows that $D_\ag(Z,X) = \a_1 (Z,X) J_Z + \a_3 (Z,X) J_Z^3 \dots + \a_{r-1} (Z,X) J_Z^{r-1} + Q(Z,X)$ for some $\a_1 (Z,X), \a_3 (Z,X), \dots, \a_{r-1} (Z,X) \in \br$ and some $Q(Z,X) \in \kg$. Now let $\ag^0 = \Ker J_z, \; \ag^1 = \Im J_Z$ and let $X = X^0 + X^1$, where $X^0 \in \ag^0$ and $X^1 \in \ag^1$. From $J_ZX=D_\ag(Z,X)X$ we obtain $J_Z X^1 = \a_1 (Z,X) J_Z X^1 + \a_3 (Z,X) J^3_Z X^1 + \dots + \a_{r-1} (Z,X) J_Z^{r-1}X^1 + Q(Z,X) X^0$. Projecting to $\ag^0$ we find that $Q(Z,X) X^0=0$. Let $\mU''(Z) \subset \ag$ be the set of those $X \in \ag$ for which the set of vectors $\{J_Z^i X \, | \, i=1, \dots, r-1\}$ $(=\{J_Z^i X^1 \, | \, i=1, \dots, r-1\})$ is linear independent. As $Z$ is generic, $\mU''(Z)$ is an open and dense subset in $\ag$ and is the complement to the union of the zero sets of a finite number of polynomials of $X$ whose coefficients are polynomials in $Z$ (representing the fact that $J_ZX \wedge J_Z^3X \wedge \dots \wedge J_Z^{r-1}X = 0$). It follows that the subset $\mU^\star=\cup_{Z \in \mU'}\mU''(Z) \subset \z \times \ag$ is open and dense in $\z \times \ag$ and moreover, for all pairs $(Z, X) \in \mU^\star \subset \z \times \ag$ we have
\begin{equation}\label{eq:JD}
\begin{gathered}
  D_\ag(Z,X) = J_Z + Q(Z,X), \quad\text{where } \\ Q(Z,X) \in \so(\ag, \ipr), \quad J_Z Q(Z,X)=0, \quad Q(Z,X) X=0.
\end{gathered}
\end{equation}
Then for all such pairs, $J_Z D_\ag(Z,X)=J_Z^2$, and so by \eqref{eq:JJ0} we obtain $J^0_{CZ}D_\ag(Z,X) = J^0_{CZ}A^{-1}J^0_{CZ}$. Now suppose that $p > 1$. By Lemma~\ref{l:orth}, both $J^0_Z$ and $D_\ag$ keep the $\ip$-orthogonal decomposition $\ag=\oplus_{\mu=0}^p \ag_\mu$ invariant. Then for any $Y_1 \in \ag_\mu, \; Y_2 \in \ag_\nu$, $\mu \ne \nu, \; 1 \le \mu, \nu \le p$, we have $0 = \<J^0_{CZ}D(Z,X) Y_1, Y_2\> = \<J^0_{CZ}A^{-1}J^0_{CZ} Y_1, Y_2\> = \<A^{-1}J^0_{CZ} Y_1, J^0_{CZ} Y_2\>$. This is satisfied for an open, dense set of $Z \in \z$, hence for all $Z \in \z$. As $Y_1 \in \ag_\mu$ and $Y_2 \in \ag_\nu$ are arbitrary and we obtain $\<A^{-1} \ag_\mu, \ag_\nu\>=0$. Since we have already specified our standard basis in such a way that $A \ag_0 \subset \ag_0$, we obtain that all the subspaces $\ag_\mu, \; \mu=0,1, \dots, p$, are $A$-invariant.

We can now specify the standard basis $\{e_1, \dots, e_n, z_1, \dots, z_m\}$ further by noting that we can take for $\{e_1, \dots, e_n\}$ the union of \emph{any} bases for $\ag_\mu, \; \mu=0,1, \dots, p$, and then modify the vectors $z_1, \dots, z_m$ accordingly. It follows that we can choose a standard basis in such a way that $A=\id$, that is, such that the restrictions of $\ipr$ and of $\ip$ to $\ag$ coincide. Then from \eqref{eq:JD} and \eqref{eq:JJ0} we obtain that for all $(Z, X) \in \mU^\star$ we have $D_\ag(Z,X) = J^0_{CZ} + Q(Z,X)$, with $D_\ag(Z,X), J^0_{CZ}, Q(Z,X) \in \so(\ag, \ip)$ and  $J_{CZ} Q(Z,X)=0$. By Lemma~\ref{l:orth} and from the fact that $J_Z=J^0_{CZ}$ has the maximal rank (as $Z \in \mU'$) we obtain that $Q(Z,X) \in \so(\ag_0, \ip) \oplus 0_{|\oplus_{\mu=1}^p  \ag_\mu}$, so in particular, $J_W Q(Z,X) =0$ for all $W \in \z$.

We now consider $D_\z(Z,X)$, the restriction of $D(Z,X)$ to $\z$. It is skew-symmetric relative to the restriction of $\ipr$ to $\z$ and satisfies the equation in Lemma~\ref{l:der}\eqref{it:dergen}. As $A=\id$, we obtain from \eqref{eq:JJ0}:
\begin{gather}\label{eq:Dskewz}
  D_\z(Z,X)^t C \in \so(\z, \ip), \\
  J^0_{D_\z(Z,X)^tW} = [J^0_{W}, D_\ag(Z,X)], \quad \text{for all } W \in \z, \label{eq:Dskeder}
\end{gather}
and so for $(Z,X) \in \mU^\star$ equation \eqref{eq:Dskeder} gives $J^0_{D_\z(Z,X)^tW} = [J^0_{W}, J^0_{CZ}]$. From Lemma~\ref{l:orth}, $\cJ^0$ is a subalgebra; as the map $J^0: \z \to \cJ^0$ is injective, the above equation determines $D_\z(Z,X)$ uniquely. Moreover, by continuity the same equation holds for all $(Z, X) \in \z \times \ag$. Furthermore, $D_\z(Z,X)$ does not depend on $X$ (so that $D_\z(Z,X)=D_\z(Z)$) and the map $Z \mapsto D_\z(Z)$ is linear.

The bilinear form $B(K_1, K_2) = -\frac12\Tr (K_1K_2)$ introduced in Section~\ref{s:intro} is an invariant form on the algebra $\cJ^0$ and  $\<Z, W\>= B(J^0_Z, J^0_W)$, so \eqref{eq:Dskewz} gives $B(J^0_{D_\z(Z)^t C W}, J^0_W)=0$, for all $Z, W \in \z$, and so $0=B([J^0_{CW}, J^0_{CZ}], J^0_W) = B([J^0_W, J^0_{CW}], J^0_{CZ})$. As $C$ is nonsingular we get $[J^0_W, J^0_{CW}]=0$. Define the endomorphism $\Phi$ of the algebra $\cJ^0$ by $\Phi J^0_W = J^0_{CW}$. Note that $\Phi$ is symmetric relative to $B$, as $C$ is symmetric relative to $\ip$. Then we get $[K, \Phi K]=0$ for all $K \in \cJ^0$. It follows that $[K_1, \Phi K_2] = [\Phi K_1, K_2]$, and so $B([K_1, \Phi K_2], K_3) = B([\Phi K_1, K_2], K_3)$, for all $K_1, K_2, K_3 \in \cJ^0$. This gives $B([\Phi K_2, K_3], K_1) = B([K_2, K_3], \Phi K_1) = B(\Phi [K_2, K_3], K_1)$, and so $[\Phi, \ad_K]=0$ for all $K \in \cJ^0$. It follows that the eigenspaces of $\Phi$ are ideals of $\cJ^0$ which are orthogonal relative to $B$. Then $(Z, W) = \<CZ, W\> = B(J^0_{CZ}, J^0_W) = B(\Phi J^0_Z, J^0_W)$, so the inner product $\ipr$ is semi-standard, as required.

\medskip

As \eqref{it:nrm} $\Rightarrow$ \eqref{it:gom} is always true, to complete the proof we have to establish the implication \eqref{it:Ksemi} $\Rightarrow$ \eqref{it:nrm}. Let $\cG$ be the union of complete graphs and let $\ipr$ be a semi-standard inner product on the algebra $\n$ associated with $\cG$. We denote $\cC_\mu, \; \mu=0,1, \dots,p$, the equivalence classes of the vertices of $\cG$, where the class $\cC_0$ contains all the isolated vertices (and can be empty) and the induced subgraph on each of the classes $\cC_\mu, \; \mu=1, \dots, p$, is a complete graph on at least two vertices. We take the standard basis $\{e_1, \dots, e_n, z_1, \dots, z_m\}$, the corresponding standard inner product $\ip$ and the operator $\Phi$ as in the definition of the semi-standard inner product in Section~\ref{s:intro}. Note that the restrictions of $\ip$ and $\ipr$ to $\ag=\Span(e_1, \dots, e_n)$ coincide and that the derived algebra $\z=\Span(z_1, \dots, z_m)$ is the orthogonal complement to $\ag$ relative to both $\ipr$ and $\ip$. We have an orthogonal decomposition $\ag=\oplus_{\mu=0}^p \ag_\mu$ relative to the both inner products, where as above, $\ag_\mu=\Span(e_i \, | \, i \in \cC_\mu)$. By \eqref{eq:defJZ} we have $\cJ=\oplus_{\mu=1}^p \so(\ag_\mu)$ (for both inner products), and so $\cJ \subset \so(\ag)$ is a subalgebra, hence satisfying the first condition of Proposition~\ref{prop:gordon}\eqref{it:nrgordon}. Furthermore, for any $Z, W \in \z$ we have $((J^{-1} \circ \ad_{J_Z} \circ J)W, W) = (J^{-1} [J_Z, J_W], W) = B(\Phi [J_Z, J_W], J_W)$ by definition of the semi-standard inner product. As $B$ is invariant and $\Phi$ is symmetric relative to $B$  we get $B(\Phi [J_Z, J_W], J_W)=B([J_W, \Phi J_W], J_Z)$. But $[J_W, \Phi J_W]=0$ as the eigenspaces of $\Phi$ are $B$-orthogonal ideals of $\cJ$. So the second condition of Proposition~\ref{prop:gordon}\eqref{it:nrgordon} is satisfied, and hence the nilmanifold $(G,g)$ with $\n = \Lie(G)$ and the left-invariant metric $g$ generated by $\ipr$ is naturally reductive. This completes the proof of the Theorem.


\end{document}